\newcommand{\tc}{\textcolor{black}}
\newcommand{\argmin}{\operatorname*{arg\,min}}
\newcommand{\R}{\mathbb R}
\newcommand{\colspan}{\operatorname{ran}}
\newcommand{\sign}{\operatorname{sign}}
\newcommand{\U}{\mathcal{U}}
\renewcommand*\env@matrix[1][*\c@MaxMatrixCols c]{%
  \hskip -\arraycolsep
  \let\@ifnextchar\new@ifnextchar
  \array{#1}}
\title{A geometric note on subspace updates and orthogonal matrix decompositions under rank-one modifications}
\author{
  Ralf Zimmermann\thanks{Department of Mathematics and Computer Science, University of Southern Denmark (SDU) Odense,
    (zimmermann@imada.sdu.dk).}
}
\begin{document}
\maketitle

\begin{abstract}
 In this work, we consider rank-one adaptations $X_{new} = X+ab^T$ of a given matrix $X\in \mathbb{R}^{n\times p}$
 with known matrix factorization $X = UW$, where $U\in\mathbb{R}^{n\times p}$ is column-orthogonal, i.e. $U^TU=I$.
 Arguably the most important methods that produce such factorizations are the singular value decomposition (SVD),
 where $X=UW=U\Sigma V^T$, and the QR-decomposition, where $X = UW = QR$.
 An elementary approach to produce a column-orthogonal matrix $U_{new}$,
 whose columns span the same subspace as the columns of the rank-one modified $X_{new} = X +ab^T$
 is via applying a suitable coordinate change such that in the new coordinates, the update affects a single column and 
 subsequently performing a Gram-Schmidt step for reorthogonalization. This may be interpreted as a rank-one adaptation of the $U$-factor in the SVD or a rank-one adaptation of the $Q$-factor in the QR-decomposition, respectively, and leads to 
 a decomposition for the adapted matrix $X_{new} = U_{new}W_{new}$.
 By using a geometric approach, we show that this operation is equivalent to traveling from the subspace $\mathcal{S}= \colspan(X)$
 to the subspace $\mathcal{S}_{new} =\colspan(X_{new})$ on a geodesic line on the Grassmann manifold and we
 derive a closed-form expression for this geodesic.
 In addition, this allows us to determine the subspace distance between
 the subspaces $\mathcal{S}$ and $\mathcal{S}_{new}$ without additional computational effort.
 Both $U_{new}$ and $W_{new}$ are obtained via elementary rank-one matrix updates
 in $\mathcal{O}(np)$ time for $n\gg p$.
 
 Possible fields of applications include subspace estimation in computer vision, signal processing, update problems in data science and adaptive model reduction.
\end{abstract}

\begin{keywords}
  singular value decomposition, rank-one update, subspace estimation, Grassmann manifold, rank-one subspace update, QR-decomposition
\end{keywords}

\begin{AMS}
  15B10, 
  15A83, 
  65F15, 
  65F25  
\end{AMS}
\section{Introduction}
Investigations on the behavior of matrix decompositions under perturbations of restricted rank have a long tradition
\cite{balzano2010online, Brand2006,BunchNielsenSorensen1978,Kaufman1976,GillGolubMurraySaunders1974, Gu:1994:SEA:196045.196076,   MoonenVanDoorenVandewalle1992, Thompson1976}.
Of special importance in many applications are rank-one modifications $X_{new} = X +ab^T$ of a given matrix $X\in \R^{n\times p}$ with either known (thin) SVD $X = U \Sigma V^T$, where $U\in \R^{n\times p}$, $\Sigma, V \in \R^{p\times p}$,
or known (compact) QR-decomposition $X = QR$, where $Q\in \R^{n\times p}$, $R \in \R^{p\times p}$.
In both cases, the matrix decomposition is of the form $X=UW$ and
the columns of $U$ and $Q$, respectively, provide an {\em orthonormal basis} for the range of $X$, i.e., the subspace $\colspan(X)$. For an introduction to subspace computations and updating matrix factorizations as well as additional references, the reader may consult \cite[\S 6.4, 6.5]{GolubVanLoan4th}.

We work in the setting, where $a\not\in \colspan{X}$. This guarantees that the modified subspace $\colspan(X_{new})$ satisfies 
\[
  \colspan(X_{new}) \neq \colspan(X),\quad 
 \dim(\colspan(X_{new})) = \dim(\colspan(X)).
\]
We refer to this as a {\em non-trivial rank-preserving subspace modification.}
The main original contribution of this note is a proof that an orthogonal matrix factor $U_{new}$,
i.e., an orthonormal basis of the updated column-span can be reached via a geodesic path on the Grassmann manifold \cite{AbsilMahonySepulchre2008}, that starts in $U$ (resp. $Q$) with a suitable tangent velocity $\Delta\in\R^{n\times p}$, where $\Delta$ is a rank-one matrix from the tangent space of the Grassmann manifold.
This establishes a closed-form expression for $U_{new}$, which, in turn, leads to a closed-form expression for the updated matrix factorization $X_{new} = X+ab^T = U_{new}W_{new}$.
It turns out that both $U_{new}$ and $W_{new}$ are obtained via a standard rank-one update on 
$U$ and $W$, respectively.
One may consider this as a formula for updating the orthonormal basis of a given subspace under arbitrary, non-degenerative rank-one modifications
of the subspace.

Possible applications present themselves in subspace tracking \cite{MoonenVanDoorenVandewalle1992},
computer vision \cite{Chandrasekaran1997, HallMarshallMartin2000} and adaptive model reduction \cite{pehersto15lowrank,ZimmermannWillcox2016, ZimmermannPeherstorferWillcox_2017}.
\subsection{Main result: The update formula}
Let $X =  UW\in \R^{n\times p}$ with $U\in \R^{n\times p}, W\in \R^{p\times p}$ such that $U^TU = I$.
Let $a\in \R^{n}\setminus \colspan(X), b\in\R^p$
and consider the rank-one update $X_{new} = X + ab^T = U W+ ab^T$.
Then, the updated decomposition can be written as 
\begin{align*}
 X_{new} &= U_{new} W_{new}, \mbox{ with } & U_{new} &= U + \left(\alpha Uw + \beta q\right) w^T,\\
 && W_{new} &= W + \left(U^Ta + \gamma w\right) b^T.
\end{align*}
Here, $q = \frac{ (I-UU^T)a}{\|(I-UU^T)a\|_2}\in\R^{n}$,  $w = \frac{-W^{-T}b}{\|W^{-T}b\|_2}\in\R^{p}$.
The coefficients $\alpha, \beta, \gamma\in\R$ and further details are specified in Theorem \ref{thm:main} 
in Section \ref{sec:main}. 
The directions $q$ and $w$ and the coefficients are such that $\colspan(X_{new}) = \colspan(U_{new})$ and 
$U_{new}$ has orthogonal columns, i.e., $U_{new}^TU_{new} = I$.
Note that both the update on $U$ and the update on $W$ can be conducted via the Level-2 BLAS function ``DGER''
which performs the operation $A := \alpha xy^T + A$.
\subsection{The case \texorpdfstring{$a\in$ ran$(X)$}{a in ran(X)}}
If $a\in \colspan(X)$, then there exists a coefficient vector $x\in \R^p$ such that $a = Xx$. As a consequence,
$\colspan(X + ab^T) = \colspan(X(I_p + x b^T)) \subset \colspan(X)$. If $I_p + xb^T$ has full rank, then the subspace is {\em preserved}.
If $I_p + xb^T$ is rank-deficient, then the subspace {\em deflated}. Obviously, this happens,
if there is $\tilde{b}$ such that $b = -\frac{1}{\tilde{b}^Tx}\tilde{b}$, in which case $x\in \ker(I_p + xb^T)$.
In fact, by the Wedderburn rank reduction theorem, this already fully characterizes the case of deflation,  see Section \ref{sec:related}.
Moreover, the Wedderburn theory shows that for the rank to decrease, necessarily $a\in \colspan(X)$.
Therefore, we restrict our considerations to the case $a\not\in \colspan(X)$.
\subsection{Notation and preliminaries}
\label{sec:NotAndPrelims}
The $(p\times p)$-{\em identity matrix} is denoted by $I_{p}\in\R^{p\times p}$, or simply by $I$, if the dimensions are clear.
The \tc{$(p\times p)-$}{\em orthogonal group}, i.e., the set of all
square orthogonal matrices, is denoted by
\[
  O(p) = \{R \in \R^{p\times p}| R^TR = RR^T = I_{p}\}.
\]
For a matrix $X\in \R^{n\times p}$, the subspace spanned by the columns of $X$ is called the {\em range} of $X$ and is
denoted by $\mathcal{X} := \colspan(X):= \{X\alpha\in \R^n| \quad \alpha \in \R^p\}\subset \R^n$.
We also speak of {\em the subspace spanned by} $X$.
The set of all $p$-dimensional subspaces $\mathcal{X}\subset \R^n$
forms the {\em Grassmann manifold}
\[
  Gr(n, p):= \{\mathcal{X}\subset \R^n| \quad \mathcal{X} \mbox{ subspace, dim}(\mathcal{X}) = p\}.
\]

The {\em Stiefel manifold} is the compact matrix manifold of all column-orthogonal
rectangular matrices
\[
  St(n, p):= \{U \in \R^{n\times p}| \quad U^TU = I_{p}\}.
\]
The Grassmann manifold can be realized as a quotient manifold
of the Stiefel manifold
\begin{equation}
\label{eq:Grassmann_quotient}
  Gr(n, p) = St(n, p)/O(p) = \{[U]| \quad U\in St(n, p)\},
\end{equation}
where $[U] = \{UR| \quad R\in O(p)\}$ is the {\em orbit}, or {\em equivalence class}
of $U$ under actions of the orthogonal group.
Hence, by definition, two matrices $U,\tilde{U} \in St(n, p)$ are in the same $O(p)$-orbit
if they differ by a $(p\times p)$-orthogonal matrix:
\[
  [U] = [\tilde{U}] :\Leftrightarrow \exists R\in O(p): U = \tilde{U}R.
\]
A matrix $U\in St(n, p)$ is called a {\em matrix representative} of a subspace $\mathcal{U}\in Gr(n, p)$,
if $\mathcal{U}=\colspan(U)$.
We will also consider the orbit $[U]$ and the subspace $\mathcal{U}=\colspan(U)$ as the same object.

The {\em tangent space} $T_{[U]}Gr(n, p)$ at a point $[U] \in Gr(n, p)$
can be thought of as the space of velocity vectors of differentiable curves on $Gr(n, p)$
passing through $[U]$.
For any matrix representative $U\in St(n, p)$ of $[U]\in Gr(n, p)$,
the tangent space of $Gr(n, p)$ at $[U]$ is represented by
\begin{equation}
\label{eq:GrassTangent}
 T_{[U]}Gr(n, p) = \left\{\Delta \in \R^{n\times p}|\quad \Delta^TU = 0\right\}\subset \R^{n\times p},
\end{equation}
its canonical metric being $\langle \Delta, \tilde{\Delta}\rangle_{Gr} = tr(\Delta^T\tilde{\Delta})$,
\cite[\S 2.5]{EdelmanAriasSmith1999}.
Endowing each tangent space with this metric turns $Gr(n, p)$ into a {\em Riemannian manifold}.
As in \cite{EdelmanAriasSmith1999}, we will make use throughout of the quotient representation (\ref{eq:Grassmann_quotient})
of the Grassmann manifold with matrices in $St(n, p)$ acting as representatives in numerical computations.

Of special importance to this work are the {\em geodesic lines} on the Grassmann manifold.
Geodesics on curved manifolds can be considered as the generalization of straight lines in flat, Euclidean spaces.
From general differential geometry \cite{DoCarmo2013riemannian}, it is known that a geodesic $t\mapsto [U](t)$ is specified by a
second-order differential equation and is thus uniquely determined by a starting point $[U] = [U](0)$ and a starting velocity $\Delta = [\dot U](0) \in T_{[U]}Gr(n, p)$. This unique dependency gives rise to the so-called Riemannian exponential function 
\[
  Exp_{[U]}: T_{[U]}Gr(n, p) \rightarrow Gr(n, p), \quad
  \Delta \mapsto Exp_{[U]}(\Delta).
\]
The associated geodesic is $t\mapsto [U](t) = Exp_{[U]}(t\Delta)$, see Fig. \ref{fig:manifold_plot_SVD_up2} for an illustration.
An explicit formula for Riemannian exponential and thus for the geodesics on the Grassmannian was derived in 
\cite[\S 2.5.1]{EdelmanAriasSmith1999}. For a given pair of initial values $U\in Gr(n,p)$, $\Delta\in T_{[U]}Gr(n,p)$, the corresponding geodesic is
\begin{equation}
 \label{eq:grassgeo}
 t \mapsto \left[ U\Psi\cos(t S)\Psi^T + \Phi\sin(t S)\Psi^T \right] \in Gr(n,p),
 \quad \Phi S\Psi^T\stackrel{\mbox{SVD}}{=}\Delta,
\end{equation}
where $\Phi\in St(n,p)$, $\Psi\in O(p)$ and $S\in \R^{p\times p}$ diagonal.\footnote{It is understood that $\cos$ and $\sin$ act only on the diagonal elements of $tS$ in eq. \eqref{eq:grassgeo}.}

For a rectangular, full column-rank matrix $X\in \R^{n\times p}$, the {\em orthogonal projection}
onto the column span of $X$ is
\begin{equation}
\label{eq:Def_OrthoProj}
  \Pi_X: \R^n \rightarrow \colspan(X), \quad x\mapsto (X(X^TX)^{-1}X^T)x.
\end{equation}
An orthonormal basis (ONB) $\{u^1,\ldots,u^p\}\subset \R^n$ of $\colspan(X)$ gives rise to a
matrix $U=(u^1,\ldots,u^p)\in St(n,p)$ and the orthogonal projection reduces to
$ \Pi_X: x\mapsto UU^Tx$.

The {\em principal angles} (aka canonical angles) $\theta_1,\ldots,\theta_p\in[0,\frac{\pi}{2}]$ between two subspaces $[U], [\tilde{U}]\in Gr(n, p)$
are defined recursively by
\[
\cos(\theta_k) := u_k^Tv_k := 
\max_{\begin{array}{l} u\in [U], \|u\|=1\\
       u\bot u_1,\ldots, u_{k-1}
      \end{array}
     }
  \max _{\begin{array}{l} v\in [\tilde{U}], \|v\|=1\\
          v\bot v_1,\ldots, v_{k-1}
         \end{array}
        }
 u^Tv.
\]
The principal angles can be computed via
$\theta_k := \arccos(\sigma_k) \in [0,\frac{\pi}{2}]$,
where $\sigma_k$ is the $k$-largest singular value of $U^T\tilde{U}\in\R^{p\times p}$
\cite[\S 6.4.3]{GolubVanLoan4th}.
The {\em Riemannian subspace distance} between $[U], [\tilde{U}]\in Gr(n, p)$ is
\begin{equation}
 \label{eq:SubspaceDist}
 \mbox{dist}([U], [\tilde{U}]) := \|\Theta\|, \quad \Theta = (\theta_1,\ldots,\theta_p)\in\R^p,
\end{equation}
see \cite[\S 2.5.1, \S 4.3]{EdelmanAriasSmith1999}.
Here and throughout, $\|\cdot\|$ denotes the Euclidean norm.
\section{Problem statement and review of the state-of-the-art}
\label{sec:probstate}
Let $a\in \R^{n}$, $b\in \R^{p}$ and consider the rank-one update
\[
  X_{new} = X + ab^T \in \R^{n \times p}.
\]
Suppose that $X$ has full column-rank $p$. Let $X= U\Sigma V^T$ denote the (thin) SVD of $X$, 
where $U\in St(n,p)$, $V\in O(p)$ and $\Sigma$ is a {\em regular} $p$-by-$p$ diagonal matrix.
Let 
$$
X_{new} = X + ab^T = U\Sigma V^T + ab^T = U_{new}\Sigma_{new} V_{new}^T
$$ denote the updated (thin) SVD after the rank-one modification.
By writing 
\[
 X_{new} = U\Sigma V^T + ab^T = \left(U + ab^TV\Sigma^{-1}\right) \Sigma V^T,
\]
we see that $\colspan(U_{new}) = \colspan(X_{new}) = \colspan(U + ab^TV\Sigma^{-1} )$. Hence, the rank-one update on $X$ acts as a rank-one update on $U$,
which can be considered as an ONB matrix representative $U\in St(n,p)$ for the subspace $\colspan(X)$.
\paragraph{Objective} The task is to find an {\em orthogonal} subspace representative $\tilde{U}_{new}\in St(n,p)$ such that 
$[\tilde{U}_{new}] = [U_{new}]$, i.e.,
such that $\tilde{U}_{new}$ spans the same subspace as the updated $U_{new}$.
\subsection*{Review: rank-one adaptations}
\label{sec:ROreview}
The standard way to approach the above objective is via rank-one SVD updates as considered in
\cite{BunchNielsen1978, Brand2006} and is briefly reviewed below.

The scheme of \cite{Brand2006} starts as follows:
The rank-one update $X + ab^T = U\Sigma V^T + ab^T$ is written in factorized form as
\begin{subequations}
 \begin{align}
 X + ab^T &= \left(U, a\right) 
 \begin{pmatrix}[c|c]
  \Sigma & 0\\
  \hline 
  0   & 1
 \end{pmatrix}
  \begin{pmatrix}[c]
  V^T\\
  b^T
 \end{pmatrix}\\
 &= \left(U, q\right) 
 \begin{pmatrix}[c|c]
  I_p & U^Ta\\
  \hline
  0   & \|\tilde{q}\|
 \end{pmatrix}
 \begin{pmatrix}[c|c]
  \Sigma & 0\\
  \hline 
  0   & 1
 \end{pmatrix}
  \begin{pmatrix}[c|c]
  I_p & 0\\
  \hline 
  b^TV & 0
 \end{pmatrix}
 \begin{pmatrix}[c]
  V^T \\
  \hline 
  0
 \end{pmatrix}
 \\
 &= \left(U, q\right) 
 \begin{pmatrix}[c|c]
  I_p & U^Ta\\
  \hline
  0   & \|\tilde{q}\|
 \end{pmatrix}
 \begin{pmatrix}[c|c]
  \Sigma & 0\\
  \hline 
  0   & 1
 \end{pmatrix}
  \begin{pmatrix}[c]
  I_p \\
  \hline 
  b^TV
 \end{pmatrix}
 V^T\\
 \label{eq:brandupdate1}
 &= \left(U, q\right)
 \biggl(
 \underbrace{
 \begin{pmatrix}
  \Sigma\\
  \hline
  0
 \end{pmatrix}
 +
  \begin{pmatrix}
  U^Ta\\
  \hline
  \|\tilde{q}\|
 \end{pmatrix}
 b^TV
 }_{=:K\in \R^{(p+1)\times p}}
 \biggr) V^T,
 \end{align}
\end{subequations}
where $\tilde{q} = (I-UU^T)a\neq 0$ is the orthogonal component of $a$ with respect to the subspace $[U]$
and $q = \frac{\tilde{q}}{\|\tilde{q}\|}$.

Note that the left and rightmost matrix factors in the decomposition \eqref{eq:brandupdate1} are (column-)orthogonal by construction.
Hence, the updated SVD is obtained by computing the SVD of the (usually small) matrix $K = U'\Sigma'V'^T\in\R^{(p+1)\times p}$
and setting $U_{new} =  \left(U, q\right) U'$ as well as $V_{new} = VV'$.
It is worth noting that the procedure works by taking a detour via a representative $(U, q)\in St(n,p+1)$
of a $(p+1)$-dimensional subspace
that is pulled back to $St(n,p)$ by the factor  $U'\in St(p+1,p)$ from the SVD of $K$.  
Mind also that an auxiliary numerical computation of the SVD of the $(p+1)$-by-$p$-matrix $K$ is required.
%

In this work, the focus is on the updated subspace $[U_{new}] \in Gr(n,p)$.
Hence, for obtaining the subspace $[U_{new}]$, a QR-decomposition of $K$ can be conducted as an alternative to the SVD of $K$.
Rank-one adaptations of the QR-decomposition are investigated in \cite{Kaufman1976}.
The idea is analogous to the one outlined above:
The procedure of \cite[p. 775]{Kaufman1976} also starts with a detour via $p+1$ columns:
Let $X = QR$ with $Q\in St(n,p)$:
\begin{equation}
 \label{eq:QR_rank1}
  X + ab^T = \left(Q, a\right) 
 \begin{pmatrix}[c]
  R\\
  \hline 
  b^T
  \end{pmatrix}
  =
  \left(Q, q\right)
   \biggl(
   \underbrace{
   \begin{pmatrix}[c]
  R\\
  \hline 
  0^T
  \end{pmatrix}
  +
     \begin{pmatrix}[c]
  Q^Ta\\
  \hline 
  \|\tilde{q}\|
  \end{pmatrix}
  b^T}_{:= \tilde{K}\in \R^{(p+1)\times p}}
  \biggr),
\end{equation}
where, as before, $\tilde{q} = (I-QQ^T)a$, $q = \frac{q}{\|\tilde{q}\|}$. 
The algorithm proceeds with applying a suitable sequence of Givens rotations 
to reestablish the QR-decomposition.

In regards of the work at hand, it is important to emphasize that both the SVD-based approach of \cite{Brand2006} and the QR-based approach of \cite{Kaufman1976} make the same detour via $p+1$ columns and that both require an algorithmic decomposition of the auxiliary matrices $K$ and $\tilde{K}$, respectively.
To the best of the author's knowledge, there is no closed-form solution to these subproblems. The approach of this work eventually avoids the $(p+1)$-columns detour and produces a closed formula for the {\em updated subspace}.
%
%
\section{An elementary approach}
\label{sec:simple}
For simplicity, consider the update problem $X_{new}= U +ab^T$, where $U\in St(n,p)$ is already column-orthogonal.
Then an orthogonal subspace representative $U_{new}$ with $\colspan(U_{new}) = \colspan(X_{new})$ may be obtained in the following
straightforward manner:
Introduce the orthogonal coordinate change 
$Z = \begin{pmatrix}
  \frac{b}{\|b\|} & \hat{Z}
\end{pmatrix}
$,
where $\hat{Z}$ is an orthogonal completion such that $Z\in O(p)$.
Then $(U +ab^T)Z = UZ + \|b\| a e_1^T$. Writing $\tilde{U} = UZ$, in the new coordinates, the update is just an update on
the first column $(U +ab^T)Z = \left(\tilde{u}^1 + \|b\| a, \tilde{u}^2,\ldots, \tilde{u}^p \right)$.
To obtain a valid $U_{new}$, one needs only to reorthogonalize the first column $\tilde{u}^1 + \|b\| a$ against
the remaining columns, i.e., replace $v = \tilde{u}^1 + \|b\| a$ with 
$\tilde{u}^*:=\frac{v - \left(\tilde{u}^2,\ldots, \tilde{u}^p \right)\left(\tilde{u}^2,\ldots, \tilde{u}^p \right)^Tv}{\|v - \left(\tilde{u}^2,\ldots, \tilde{u}^p \right)\left(\tilde{u}^2,\ldots, \tilde{u}^p \right)^Tv   \|}$.\\
Because of $\left(\tilde{u}^2,\ldots, \tilde{u}^p \right)\left(\tilde{u}^2,\ldots, \tilde{u}^p \right)^T = UU^T - \tilde{u}^1(\tilde{u}^1)^T$
and $\tilde{u}^1 = U\frac{b}{\|b\|}$, an orthogonal subspace representative $\tilde{U}_{new}$ is readily found to be
$$
\tilde{U}_{new} = \left(\tilde{u}^*,\tilde{u}^2,\ldots, \tilde{u}^p \right), 
\quad \tilde{u}^*= \frac{(1 + a^TUb)\tilde{u}^1 + \|b\|\tilde{q}}{\|(1 + a^TUb)\tilde{u}^1 + \|b\|\tilde{q}\|},$$
where, as before, $\tilde{q} =  (I-UU^T)a$. Since $\tilde{u}^1 \bot \tilde{q}$, the denominator in the expression of
$\tilde{u}^*$ equals $\|(1 + a^TUb)\tilde{u}^1 + \|b\|\tilde{q}\|= \|\tilde{q}\|\|g\|$ with $\|g\|:= \left(\frac{(1 + a^TUb)^2}{\|\tilde{q}\|^2}+ \|b\|^2\right)^{1/2}$. (The underlying vector $g$ will be introduced in Theorem \ref{thm:main}.)
Thus, we obtain
\[
 \tilde{U}_{new} = UZ + \left(\frac{1 + a^TUb}{\|\tilde{q}\|} -1\right)\tilde{u}^1e_1^T + \frac{\|b\|}{\|g\|} q e_1^T.
\]
Transforming back to the original coordinates gives
\begin{equation}
\label{eq:simple}
  U_{new}:= \tilde{U}Z^T = U + \left(\left(\frac{1 + a^TUb}{\|\tilde{q}\|\|g\|} -1\right)\tilde{u}^1 + \frac{\|b\|}{\|g\|} q\right)\frac{b^T}{\|b\|}=:U + vw^T.
\end{equation}
From this expression, it is clear, that the matrix $Z$ need not be formed explicitly.
The original update $U+ab^T$ is replaced with $U+vw^T$, which produces a column-orthogonal matrix.
Moreover, since, $\colspan(U_{new}) = \colspan(U + ab^T)$, the update can be written in factorized form as
\begin{equation}
 U + ab^T =U_{new}U_{new}^T( U + ab^T) = U_{new}\underbrace{(I +  \tilde{v}w^T)}_{:=\hat{K}\in \R^{p\times p}}.
\end{equation}
This form compares to \eqref{eq:brandupdate1}, \eqref{eq:QR_rank1}
but avoids the detour via going to $p+1$ columns.
Reestablishing the SVD or the QR can be achieved by decomposing $\hat{K}= (I +  \tilde{v}w^T)$ accordingly.
In the next section, it is shown that the above procedure actually corresponds to traveling
along a geodesic line from $[U]$ to $[U_{new}]$ on the Grassmannian.
\section{General geometric rank-one subspace adaptation}
\label{sec:main}
%
\begin{figure}[ht]
\centering
\includegraphics[width=0.75\textwidth]{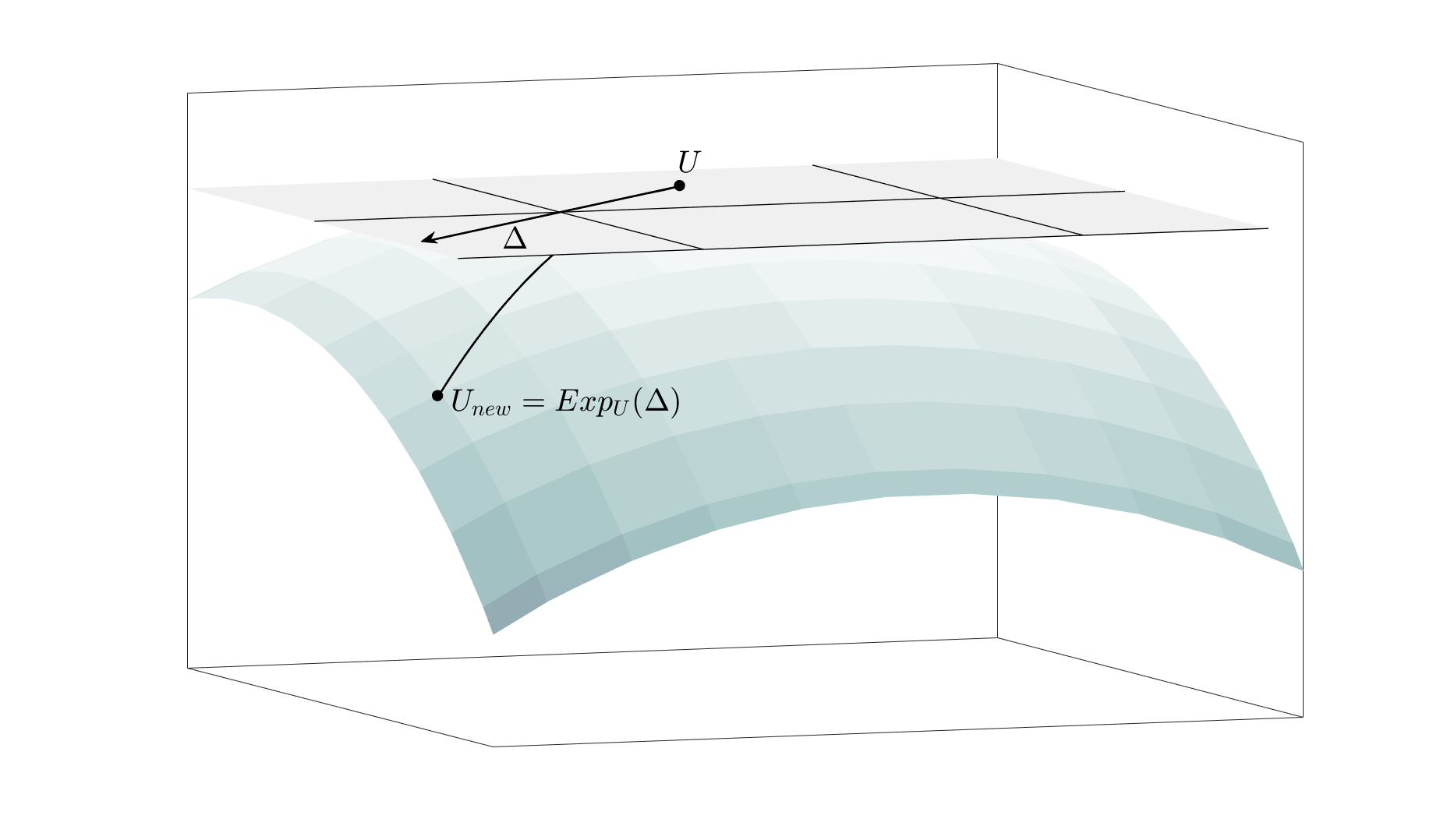}
\caption{Visualization of the Grassmann manifold $Gr(n,p)$ (curved surface)
with the tangent space $T_UGr(n,p)$ in $U$ (flat plane).
The vector $\Delta$ in the tangent space acts as the starting velocity vector of
the geodesic (solid line) starting in $U\in Gr(n,p)$.
}
\label{fig:manifold_plot_SVD_up}
\end{figure}
%
In this section, we present a geometric approach to the rank-one subspace adaptation problem introduced in Section \ref{sec:probstate}.

First, we formalize the notion of rank-one subspace adaptations. Just as the notion of a subspace itself, this concept should not depend on the matrix representatives.
\begin{definition}
\label{def:ROmods}
 Let $\U, \tilde{\U}\in Gr(n,p)$ be two subspaces of dimension $p$ in $\R^n$.
 We say that $\U$ and $\tilde{\U}$ differ by a non-trivial {\em rank-one modification},
 if all but one of the principle angles between $\U$ and $\tilde{\U}$ are zero., i.e., if
 $\theta_1=\cdots=\theta_{p-1} = 0$, $\theta_p >0$, 
\end{definition}

On the level of matrix subspace representatives, this corresponds indeed to rank-one matrix modifications, as the next lemma shows.
\begin{lemma}
\label{lem:ROchar}
 Two subspaces $\U,\tilde{\U}\in Gr(n,p)$ differ by a non-trivial rank-one modification if and only if
 there exist subspace representatives $U,\tilde{U}\in St(n,p)$ with $[U]=\U, [\tilde{U}]=\tilde{\U}$ and non-zero vectors $x\in \R^n\setminus\colspan(U), y\in \R^p$ such that 
 $\tilde{U} = U + xy^T.$
\end{lemma}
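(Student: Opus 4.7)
The plan is to prove both implications. For the forward direction, recall that the number of zero principal angles equals $\dim(\U\cap\tilde{\U})$, so the hypothesis yields $\dim(\U\cap\tilde{\U})=p-1$. Pick an orthonormal basis $u_1,\dots,u_{p-1}$ of this intersection and extend to unit vectors $u_p\in\U$ and $\tilde{u}_p\in\tilde{\U}$, each orthogonal to $u_1,\dots,u_{p-1}$. Setting $U=(u_1,\dots,u_{p-1},u_p)$ and $\tilde{U}=(u_1,\dots,u_{p-1},\tilde{u}_p)$, both of which lie in $St(n,p)$, the two matrices share their first $p-1$ columns, hence $\tilde{U}=U+xy^T$ with $x=\tilde{u}_p-u_p$ and $y=e_p$. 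It then follows that $x\notin\colspan(U)$, for otherwise $\tilde{u}_p\in\U\cap\tilde{\U}$ would be orthogonal to a basis of that intersection and hence zero, contradicting $\|\tilde{u}_p\|=1$. In particular $x\neq 0$, and $y=e_p\neq 0$.

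For the converse, let $\tilde{U}=U+xy^T$ with $U,\tilde{U}\in St(n,p)$, $x\notin\colspan(U)$, and $y\neq 0$. Expanding $\tilde{U}^T\tilde{U}=I$ and using $U^TU=I$ produces the identity $(U^Tx)y^T+y(U^Tx)^T+\|x\|^2\, yy^T=0$. Right-multiplying by any $v\perp y$ yields $y(U^Tx)^Tv=0$, so $U^Tx$ must be a scalar multiple of $y$; writing $U^Tx=\lambda y$ and comparing coefficients in front of $yy^T$ gives $\lambda=-\|x\|^2/2$. Therefore $U^T\tilde{U}=I+(U^Tx)y^T=I-\tfrac{\|x\|^2}{2}yy^T$ is symmetric, with $p-1$ eigenvalues equal to $1$ (on the $(p-1)$-dimensional subspace $y^\perp$) and one eigenvalue equal to $1-\tfrac{\|x\|^2\|y\|^2}{2}$ along $y$, giving singular values $1,\dots,1,\, |1-\tfrac{\|x\|^2\|y\|^2}{2}|$.

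To guarantee $\theta_p>0$ I still need the last singular value to be strictly less than $1$. The condition $x\notin\colspan(U)$ provides the strict contraction $\|U^Tx\|<\|x\|$; combined with $\|U^Tx\|=\tfrac{\|x\|^2\|y\|}{2}$ this gives $\|x\|\|y\|<2$, hence $1-\tfrac{\|x\|^2\|y\|^2}{2}\in(-1,1)$. Via $\theta_k=\arccos(\sigma_k)$, the principal angles are $\theta_1=\dots=\theta_{p-1}=0$ and $\theta_p>0$, completing the proof. The main obstacle is the converse: for a generic rank-one perturbation $I+cy^T$ of the identity only $p-2$ of its singular values need equal $1$ (those associated with directions orthogonal to both $c$ and $y$); the orthonormality constraint $\tilde{U}^T\tilde{U}=I$ is exactly what forces $c=U^Tx$ onto the line through $y$ and produces the extra zero principal angle.
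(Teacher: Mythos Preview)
Your proof is correct. The forward direction is essentially the same as the paper's: both arrive at representatives sharing their first $p-1$ columns and set $x=\tilde u_p-u_p$, $y=e_p$; you invoke the geometric fact $\dim(\U\cap\tilde\U)=p-1$ directly, while the paper reaches the same configuration via the SVD of $U^T\tilde U$ and Cauchy--Schwarz.

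The converse is where the two proofs genuinely diverge. The paper treats $U^T\tilde U=I_p+(U^Tx)y^T$ as a generic rank-one perturbation of the identity and invokes Thompson's singular-value interlacing theorem to pin down $p-1$ singular values at~$1$, then argues separately that the last one is strictly below~$1$. Your argument is more elementary and more informative: by expanding $\tilde U^T\tilde U=I$ you discover that the orthonormality constraint forces $U^Tx$ to be parallel to $y$, so $U^T\tilde U=I-\tfrac{\|x\|^2}{2}yy^T$ is actually \emph{symmetric}, and its eigenvalues (hence singular values) can be read off directly. The bound $\|x\|\,\|y\|<2$ coming from $\|U^Tx\|<\|x\|$ then gives $\theta_p>0$ without any external interlacing result. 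Your approach thus avoids citing Thompson's theorem, yields the explicit value $\cos\theta_p=\bigl|1-\tfrac{\|x\|^2\|y\|^2}{2}\bigr|$, and makes transparent the point you raise at the end: it is precisely the Stiefel constraint on $\tilde U$ that collapses the two-dimensional subspace $\operatorname{span}\{U^Tx,y\}$ to a line and produces the extra zero principal angle.
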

\begin{proof}
 `$\Rightarrow$' Let $U,\tilde{U}\in St(n,p)$ be arbitrary matrix representatives of the subspaces $\U$ and $\tilde{\U}$, respectively.
 Then, $U^T\tilde{U} \stackrel{\mbox{SVD}}{=} R S \tilde{R}^T$,  where $R,\tilde{R} \in O(p)$
 and $S = \mbox{diag}(1,\ldots,1, s_p)$, $0\leq s_p<1$, so that $\theta_p = \arccos(s_p)>0$ is the only non-zero principal angle.
 W.l.o.g., we replace the representatives $U,\tilde{U}$ with $UR,\tilde{U}\tilde{R}$,
 so that after this coordinate change, we have
 \[
  U^T \tilde{U} = \left(\langle u^j, \tilde{u}^k \rangle \right)_{j,k=1,\ldots,p}
  = S = 
  \begin{pmatrix}[c|c]
    I_{p-1} &   \\
    \hline
        & s_p \\
  \end{pmatrix}.
 \]
 In particular, by Cauchy-Schwartz' inequality,  $1 = \langle u^k, \tilde{u}^k \rangle \leq \|u^k\|\|\tilde{u}^k\| = 1$, $k=1,\ldots,p-1$ and, as a consequence $u^k =  \tilde{u}^k$ for $k=1,\ldots,p-1$.
 In summary,   $[U]=\U, [\tilde{U}] = \tilde{\U}$ and
 \[
  \tilde{U} = U + xy^T, \mbox{ with } x = (\tilde{u}^p - u^p),\quad y^T = e_p^T= (0,\ldots,0,1).
 \]
 Observe that $U^Tx = (0,\ldots,0,s_p -1)^T\neq 0$ and that $x\not\in \colspan(U)$. Otherwise, $\tilde{u}^p$ and $u^p$ would be collinear in contradiction to the fact that $s_p = \langle u^p, \tilde{u}^p \rangle < 1$. \\
`$\Leftarrow$' Suppose that $x\in \R^n\setminus\colspan(U), y\in \R^p\setminus \{0\}$ are such that $\tilde{U} = U + xy^T\in St(n,p)$.
First, note that necessarily $U^Tx\neq 0$, since otherwise we would have $I_p = \tilde{U}^T \tilde{U} = I_p +\|x\|^2 yy^T$
and thus $x=0$ or $y=0$. Since both $U$ and $\tilde{U}$ have orthonormal columns, it holds $\|U^T\tilde{U} \| \leq \|U\|\|\tilde{U}\| = 1$. The principal angles between the subspaces $[U]$ and $[\tilde{U}]$ are determined by the singular values of  $U^T\tilde{U}=I_p + U^Txy^T$,
which is a non-trivial rank-one modification of the $(p\times p)$ identity matrix.
By \cite[Theorem 1]{Thompson1976}, the singular values $\beta_1\geq\ldots\geq\beta_p$ of $B:=U^T\tilde{U}$ are sandwiched between the singular values of $I_p$ in the following way:
\[
 \alpha_{k-1} \geq \beta_k \geq \alpha_{k+1}, \quad k=1,\ldots,p,
\]
where in the case at hand, $\alpha_0 = \infty, \alpha_1=\cdots=\alpha_p=1, \alpha_{p+1} = 0$. 
Combined with the fact that $\beta_1 = \|U^T\tilde{U} \|$, this entails $\beta_k=1$, $k=1,\ldots, p-1$ and $1\geq \beta_p\geq 0$. In fact, $1>\beta_p$, for otherwise, $U^T\tilde{U}= B = I_p + U^Txy^T$ would be an orthogonal matrix.\footnote{As an aside, in this case $B$ would necessarily be a Householder reflection with $y = \frac{-2}{\|U^Tx\|^2}U^Tx$ so that $B = I-\frac{2}{\|U^Tx\|^2}U^Txx^TU$.}
But then $[U] = [\tilde{U}]$, $\tilde{U} = UB$.
Hence, $\tilde{U} - U = U(B-I)= xy^T$, which implies $x\in \colspan(U)$, a contradiction.
As a consequence, the principal subspace angles are
\[
 \theta_k = \arccos(\beta_k) = 0, \quad k=1,\ldots,p-1,\quad \mbox{and } \frac{\pi}{2}\geq \theta_p = \arccos(\beta_p) > 0. 
\]
\end{proof}

Note that the implicit requirement that $\tilde{U} = U + xy^T\in St(n,p)$, i.e., that the rank-one update by $xy^T$ preserves the mutual orthonormality of the columns of $\tilde{U}$, imposes special constraints on the selection of the vectors $x,y$.
Intuitively, we expect that any two subspaces $\colspan(X)$ and $\colspan(X+ab^T)$ 
differ by a rank-one modification in the sense of Definition \ref{def:ROmods}.
This can be deduced as an immediate consequence of Lemma \ref{lem:ROchar} and the upcoming Theorem \ref{thm:main}.
Yet, one can also establish this directly: For example, the QR-update procedure \eqref{eq:QR_rank1} gives 
$$X + ab^T = QR + ab^T = (Q,q) \tilde{K} = (Q,q) \begin{pmatrix}\phi\\ \varphi^T\end{pmatrix}\tilde{R} = (Q\phi + q\varphi^T)\tilde{R}.$$
Here, $\Phi =\begin{pmatrix}\phi\\ \varphi^T\end{pmatrix}\in St(p+1,p)$, so that 
$(Q\phi + q\varphi^T)\in St(n,p)$ but $Q\phi \not\in St(n,p)$ in general.
The principal angles are determined by the singular values of $Q^T(Q\phi + q\varphi^T) = \phi$.
The singular values of $\phi$ are the square roots of the eigenvalues of $\phi^T\phi = I_p - \varphi\varphi^T$.
By \cite[Theorem 1]{BunchNielsenSorensen1978}, all eigenvalues and thus all singular values are equal to $1$
with the smallest one as the only exception. Thus, we have proved
\begin{corollary}
 \label{cor:ROmods}
 Two subspaces $\colspan(X)$ and $\colspan(X+ab^T)$ with $a\not\in\colspan(X)$ differ by a non-trivial rank-one modification in the sense of
 Definition \ref{def:ROmods}. 
 In particular, there exist a subspace representative $U$ and vectors $x$, $y$ such that
 both $U\in St(n,p)$ and $U+xy^T\in St(n,p)$ and, in addition,
 $[U] = \colspan(X)$, $[U+xy^T] = \colspan(X+ab^T)$.
\end{corollary}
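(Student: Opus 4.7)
The plan is to combine the Stiefel-manifold factorization from the QR-update procedure in \eqref{eq:QR_rank1} with Lemma \ref{lem:ROchar}, reducing the corollary to a spectral analysis of a rank-one perturbation of the identity matrix.

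First, I would re-examine the factorization $X + ab^T = (Q\phi + q\varphi^T)\tilde{R}$, where $Q$ comes from $X = QR$, $q = \tilde{q}/\|\tilde{q}\|$ with $\tilde{q} = (I - QQ^T)a$ non-zero by the assumption $a \notin \colspan(X)$, and $\Phi = \begin{pmatrix}\phi\\\varphi^T\end{pmatrix} \in St(p+1,p)$, $\tilde{R} \in \R^{p\times p}$ arise from a QR-decomposition of the $(p+1)\times p$ matrix $\tilde{K}$ in \eqref{eq:QR_rank1}. The Stiefel condition $\Phi^T\Phi = I_p$ yields the key identity $\phi^T\phi + \varphi\varphi^T = I_p$, and combined with $Q^Tq = 0$ this shows $Q\phi + q\varphi^T \in St(n,p)$ and $\colspan(Q\phi + q\varphi^T) = \colspan(X + ab^T)$, using invertibility of $\tilde{R}$, which follows from $X + ab^T$ having full column rank under the standing assumptions.

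Next, I would compute the principal angles between $\colspan(X)$ and $\colspan(X + ab^T)$ via the singular values of $Q^T(Q\phi + q\varphi^T) = \phi$. Since $\phi^T\phi = I_p - \varphi\varphi^T$ is a rank-one modification of the identity, its eigenvalues are $1$ with multiplicity $p-1$ and the single eigenvalue $1 - \|\varphi\|^2 \in [0,1]$. Matching the bottom row of the identity $\tilde{K} = \Phi\tilde{R}$ gives $\varphi = \|\tilde{q}\|\tilde{R}^{-T}b$, which is non-zero because $\|\tilde{q}\| > 0$ and $b \neq 0$ (otherwise the entire update is trivial). Hence $\|\varphi\|^2 > 0$, and the principal angles satisfy $\theta_1 = \cdots = \theta_{p-1} = 0$ together with $\theta_p = \arccos\sqrt{1 - \|\varphi\|^2} > 0$, matching Definition \ref{def:ROmods}. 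Applying the ``$\Rightarrow$'' direction of Lemma \ref{lem:ROchar} to $\U = \colspan(X)$ and $\tilde{\U} = \colspan(X + ab^T)$ then produces representatives $U, U + xy^T \in St(n,p)$ with $[U] = \colspan(X)$ and $[U + xy^T] = \colspan(X + ab^T)$, yielding the second assertion.

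The main obstacle will be confirming strict positivity of $\theta_p$: one must carefully trace the non-vanishing of $\varphi$ through the QR-factorization of $\tilde{K}$ back to the two ingredients $a \notin \colspan(X)$ (which gives $\tilde{q} \neq 0$) and the implicit non-triviality of $b$. Everything else reduces to routine bookkeeping around the orthogonality relations $\phi^T\phi + \varphi\varphi^T = I_p$ and $Q^Tq = 0$, together with the invocation of Lemma \ref{lem:ROchar}.
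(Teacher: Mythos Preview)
Your proposal is correct and follows essentially the same route as the paper: both use the QR-update factorization \eqref{eq:QR_rank1} to produce the Stiefel representative $Q\phi + q\varphi^T$, compute the principal angles from the singular values of $\phi$ via $\phi^T\phi = I_p - \varphi\varphi^T$, and then invoke Lemma~\ref{lem:ROchar} for the second assertion. The only cosmetic differences are that the paper cites \cite[Theorem~1]{BunchNielsenSorensen1978} for the eigenvalue pattern of $I_p - \varphi\varphi^T$ where you compute it directly, and that you are more explicit about why $\theta_p$ is strictly positive (tracing $\varphi = \|\tilde{q}\|\tilde{R}^{-T}b \neq 0$ back to $a\notin\colspan(X)$ and $b\neq 0$), a point the paper leaves implicit.
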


How to actually obtain such structure-preserving vectors $x,y$ from a given {\em arbitrary} rank-one update in closed form is an alternative way to state the main objective of this work.

We now turn to the geometric solution of the rank-one subspace adaptation problem of Section \ref{sec:probstate}.
The idea is to find a geodesic path on the Grassmann manifold $Gr(n,p)$ that connects $[U]$ and $[U_{new}]$.
As outlined in Section \ref{sec:NotAndPrelims}, such a geodesic is determined by a starting point $[U]$ and a starting velocity $\Delta \in T_{[U]}Gr(n,p)$ which results in the expression \eqref{eq:grassgeo}.
In the special case, where the tangent velocity $\Delta$ is a rank-one matrix $\Delta = d v^T$,
the compact SVD of $\Delta$ is 
\begin{equation}
\label{eq:rank1grad_svd}
\Delta = \Phi\begin{pmatrix}[c|c]
                                  s &   \\
                                  \hline
                                    & 0_{p-1} \\
                                 \end{pmatrix} \Psi^T
                                 = \frac{d}{\|d\|} (\|d\|\|v\|) \frac{v^T}{\|v\|} =:  q s w^T\in \R^{n\times p}
\end{equation}
and the formula for the geodesic \eqref{eq:grassgeo} becomes
\begin{equation}
\label{eq:rank1geo}
 t\mapsto Exp_{[U]}(t\Delta)=[U +\left((\cos(ts)-1) Uw + \sin(ts)q \right) w^T] =: [U + \hat{u}(t)w^T].
\end{equation}
This shows that following a geodesic path along a rank-one tangent direction corresponds to a matrix curve of
rank-one updates, where $U + \hat{u}(t)w^T\in St(n,p)$ for each $t$.
Conversely, this motivates the conjecture that a rank-one adaptation $[U_{new}]$ of a given subspace representative $[U]$
can be reached via a geodesic path along a rank-one tangent direction.
The obstacle is to find the associated tangent direction $\Delta$.\footnote{If $[U_{new}]$ and $[U]$ are both known, then $\Delta$ can be computed via the Riemannian logarithm, i.e., the inverse of the exponential. The difficulty here is that $[U_{new}]$ is precisely the quantity that is sought after.}

This is the main result of this work: Given a rank-one adaptation of a  matrix $X + ab^T$
with $\colspan(X) = [U]$, $U\in St(n,p)$, we find a rank-one tangent vector 
$\Delta\in T_{[U]}Gr(n,p)$ of unit norm
and a step $t^*\in\R$ such that the associated geodesic crosses the adapted subspace 
at $t^*$. More precisely,
\[
 \colspan(X + ab^T)\stackrel{!}{=} [\tilde{U}_{new}] = Exp_{[U]}(t^*\Delta),
\]
with an orthogonal subspace representative $\tilde{U}_{new}\in St(n,p)$,
see Fig. \ref{fig:manifold_plot_SVD_up}. 
Note that we can obtain a subspace representative $U\in St(n,p)$ with $\colspan(X) = [U]$
from an SVD or a QR-decomposition of $X$. Both alternatives boil down to
a decomposition of the form $X = UW$ with $U\in St(n,p)$ and $W\in \R^{p\times p}$ regular and the rank-one update on $X$
leads to a rank-one update on $U$ via
$X+ab^T = (U + ab^TW^{-1})W$.

%
%
An important building block is the following lemma, which is taken from 
\cite{ZimmermannPeherstorferWillcox_2017}. It addresses the modification
of the orthogonal projector $\Pi_{X_{new}}= U_{new}U^T_{new}$ under the rank-one update on $X$ in closed form. 
The original lemma addressed the SVD-case of $W= \Sigma V^T$.
Adjusted to the general setting of $X= UW$, it reads
%
%
%
%
\begin{lemma}[\cite{ZimmermannPeherstorferWillcox_2017}]
 \label{lem:rank_one_proj}
 Let $X\in\R^{n\times p}$ feature a decomposition of 
    $X =  UW$ with $U\in St(n,p)$ and $W\in \R^{p\times p}$ regular. 
    Let 
    $X_{new} = X + ab^T$ and
 define
 \begin{subequations}
  \begin{align}
   \tilde{q} &= (I - UU^T)a, \quad  q = \frac{\tilde{q}}{\|\tilde{q}\|_2} \in \R^{n}, \\
   g &= \begin{pmatrix}
        \tilde{w}\\
        \omega
       \end{pmatrix}
      = \begin{pmatrix}
        - W^{-T}b\\
        \frac{1}{\|\tilde{q}\|_2}(1+a^TU W^{-T}b)
       \end{pmatrix} \in \R^{p +1}.
  \end{align}
 \end{subequations}
 Then the orthogonal projection onto $\colspan(X_{new})$ is
 \begin{equation}
 \label{eq:rank_one_proj}
  \Pi_{X_{new}} = (U,q) \begin{pmatrix}
                         U^T\\
                         q^T
                        \end{pmatrix}
                -\frac{1}{\|g\|_2^2} (U,q)g g^T
                        \begin{pmatrix}
                         U^T\\
                         q^T
                        \end{pmatrix}.
 \end{equation}
\end{lemma}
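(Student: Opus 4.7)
The plan is to exploit the fact that $\colspan(X_{new})$ is a $p$-dimensional subspace of the $(p+1)$-dimensional ambient space $\mathcal{V}:=\colspan(U,q)$, so its orthogonal projector can be written as the projector onto $\mathcal{V}$ minus the projector onto the one-dimensional orthogonal complement of $\colspan(X_{new})$ inside $\mathcal{V}$. The right-hand side of the claimed identity is precisely of that form, so the task reduces to identifying that one-dimensional complement.

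First I would verify the two structural facts that the formula requires. Because $a = UU^Ta + \tilde{q} = UU^Ta + \|\tilde{q}\| q$, every column of $X_{new} = UW + ab^T$ lies in $\mathcal{V}$, so $\colspan(X_{new}) \subseteq \mathcal{V}$. Since $q\perp U$ and $\|q\|=1$, the matrix $(U,q)$ belongs to $St(n,p+1)$, hence $(U,q)(U,q)^T$ is the orthogonal projector onto $\mathcal{V}$. It remains to show that the vector $(U,q)g\in \mathcal{V}$ lies in the orthogonal complement of $\colspan(X_{new})$ inside $\mathcal{V}$, or equivalently that $(U,q)g$ is orthogonal to every column of $X_{new}$.

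The central computation is therefore to verify
\[
 g^T (U,q)^T X_{new} = g^T (U,q)^T (UW + ab^T) = 0.
\]
Using $U^TU=I$ and $q^TU=0$ gives $(U,q)^T U = \bigl(\begin{smallmatrix} I_p\\ 0\end{smallmatrix}\bigr)$, hence $(U,q)^T UW = \bigl(\begin{smallmatrix} W\\ 0\end{smallmatrix}\bigr)$. Since $q^Ta = \tilde{q}^Ta/\|\tilde{q}\| = \|\tilde{q}\|$, we also have $(U,q)^T a = \bigl(\begin{smallmatrix} U^Ta\\ \|\tilde{q}\|\end{smallmatrix}\bigr)$. Writing $g=\bigl(\begin{smallmatrix}\tilde{w}\\ \omega\end{smallmatrix}\bigr)$ with $\tilde{w}=-W^{-T}b$ and $\omega = (1+a^TUW^{-T}b)/\|\tilde{q}\|$, the desired orthogonality reduces to
\[
 \tilde{w}^T W + \bigl(\tilde{w}^T U^T a + \omega \|\tilde{q}\|\bigr) b^T = 0,
\]
and a direct substitution shows that $\tilde{w}^T W = -b^T$ and $\tilde{w}^T U^T a + \omega\|\tilde{q}\| = 1$, so the bracketed scalar cancels $\tilde{w}^T W$ exactly. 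Finally, because $g\neq 0$ (its last component is nonzero whenever $a\notin\colspan(X)$ and $\|\tilde{q}\|>0$) and $\dim\colspan(X_{new})=p$ by Corollary~\ref{cor:ROmods}, the nonzero vector $(U,q)g\in\mathcal{V}$ must span the one-dimensional complement. Subtracting the normalised rank-one projector $\frac{1}{\|g\|_2^2}(U,q)gg^T(U,q)^T$ from $(U,q)(U,q)^T$ therefore yields $\Pi_{X_{new}}$.

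The main obstacle is purely bookkeeping: carrying the factor $W$ through the argument in place of the SVD factor $\Sigma V^T$ used in the original reference, and correctly handling the scaling $1/\|\tilde{q}\|$ in the last entry of $g$. Once the two block computations $(U,q)^T U$ and $(U,q)^T a$ are in hand, the verification is a one-line matrix identity; no decomposition of $X_{new}$ itself is needed.
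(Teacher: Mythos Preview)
Your argument is essentially the paper's own: both recognise that $\colspan(X_{new})$ sits as a $p$-dimensional subspace of $\colspan(U,q)$ and that the projector is obtained by subtracting the rank-one projector onto the one-dimensional complement, characterised by $g^T(U,q)^TX_{new}=0$; the paper reaches the same condition by writing $X_{new}=(U,q)K$ and solving $K^Tg=0$, which is equivalent since $(U,q)^T(U,q)=I_{p+1}$. One minor slip: the last entry $\omega$ of $g$ can vanish (take $a^TUW^{-T}b=-1$), so your parenthetical justification for $g\neq0$ is off --- but the conclusion still holds because $\tilde w=-W^{-T}b\neq0$ whenever $b\neq0$.
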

For the sake of completeness, a proof of the lemma is included in the appendix.

We are now in a position to state the main theorem.
\begin{theorem}
\label{thm:main}
In the same setting as above, consider the rank-one update $X_{new} = X + ab^T = U W+ ab^T$.
Then, 
\begin{equation}
\label{eq:RO-update-final}
 U_{new} = U + \left(\alpha Uw + \beta q\right) w^T
\end{equation}
is a valid matrix subspace representative $U_{new} \in St(n,p)$ of the 
rank-one modified subspace such that $[U_{new}] =\colspan(X_{new}) \in Gr(n,p)$.
In particular, $\Delta := qw^T$ is a rank-one tangent vector $\Delta\in T_{[U]}Gr(n,p)$ and the geodesic that starts at $[U]$ with velocity $\Delta$ meets the point $\colspan(X_{new})$ on the Grassmann manifold.\\
The quantities that appear in \eqref{eq:RO-update-final} are defined as follows:
\begin{subequations}
  \begin{align}
  \nonumber
  \tilde{q} = (I-UU^T)a, \quad q = \frac{\tilde{q}}{\|\tilde{q}\|}, \quad &  \tilde{w} = -W^{-T}b, \quad w = \frac{\tilde{w}}{\|\tilde{w}\|},\\
    \nonumber
  \omega = \frac{1}{\|\tilde{q}\|}(1-a^TU\tilde{w})\in \R, \quad& g = (\tilde{w}, \omega)^T\in \R^{p+1},\\
    \nonumber
  \alpha = \frac{|\omega|}{\|g\|} - 1, \quad & \beta = -\sign(\omega) \frac{\|\tilde{w}\|}{\|g\|} \in \R.
  \end{align}
\end{subequations}
\end{theorem}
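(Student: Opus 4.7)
The proof naturally splits into four steps: (i) confirm $\Delta = qw^T$ is tangent to $Gr(n,p)$ at $[U]$, (ii) verify $U_{new}^T U_{new} = I_p$, (iii) show $U_{new} U_{new}^T$ equals the projector $\Pi_{X_{new}}$ from Lemma~\ref{lem:rank_one_proj}, and (iv) identify $U_{new}$ as a geodesic endpoint. For step (i), by construction $\tilde q = (I - UU^T) a$ satisfies $U^T \tilde q = 0$, hence $q^T U = 0$ and $\Delta^T U = wq^T U = 0$, confirming $\Delta \in T_{[U]}Gr(n,p)$ via \eqref{eq:GrassTangent}. Since $\|q\| = \|w\| = 1$, the compact SVD of $\Delta$ is $q \cdot 1 \cdot w^T$, and \eqref{eq:rank1geo} specializes to
\begin{equation*}
Exp_{[U]}(t \Delta) = \bigl[\,U + \bigl((\cos t - 1)\, Uw + \sin(t)\, q \bigr) w^T\,\bigr],
\end{equation*}
so the update formula \eqref{eq:RO-update-final} will match this endpoint provided $1+\alpha = \cos t^\ast$ and $\beta = \sin t^\ast$ for some $t^\ast$.

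For step (ii), I would expand $U_{new}^T U_{new} = I_p + (2\alpha + \alpha^2 + \beta^2)\,ww^T$ using $U^TU = I$, $q^TU = 0$ and $\|w\| = \|q\| = 1$; all mixed terms collapse. Substituting the closed forms of $\alpha$ and $\beta$ yields $(1+\alpha)^2 + \beta^2 = (\omega^2 + \|\tilde w\|^2)/\|g\|^2 = 1$, because $\|g\|^2 = \|\tilde w\|^2 + \omega^2$ by the definition of $g$. Hence the coefficient vanishes and $U_{new} \in St(n,p)$. The same identity shows that the choice $t^\ast := -\sign(\omega)\arccos(|\omega|/\|g\|)$ gives $\cos t^\ast = |\omega|/\|g\| = 1+\alpha$ and $\sin t^\ast = -\sign(\omega)\|\tilde w\|/\|g\| = \beta$, closing step (iv).

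For step (iii), I would expand $U_{new}U_{new}^T$ into the four outer products $UU^T,\ Uww^TU^T,\ qq^T,\ Uwq^T + qw^TU^T$ with coefficients $1,\ 2\alpha+\alpha^2,\ \beta^2,\ \beta(1+\alpha)$. On the other side, writing $\tilde w = \|\tilde w\|w$ in Lemma~\ref{lem:rank_one_proj} expresses $\Pi_{X_{new}}$ in the same basis with coefficients $1,\ -\|\tilde w\|^2/\|g\|^2,\ \|\tilde w\|^2/\|g\|^2,\ -\omega\|\tilde w\|/\|g\|^2$, the third obtained via $1 - \omega^2/\|g\|^2 = \|\tilde w\|^2/\|g\|^2$. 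Matching coefficients reduces to the three identities
\begin{equation*}
(1+\alpha)^2 - 1 = -\|\tilde w\|^2/\|g\|^2, \quad \beta^2 = \|\tilde w\|^2/\|g\|^2, \quad \beta(1+\alpha) = -\omega\|\tilde w\|/\|g\|^2,
\end{equation*}
each a direct consequence of $1+\alpha = |\omega|/\|g\|$, $\beta = -\sign(\omega)\|\tilde w\|/\|g\|$, and $\sign(\omega)|\omega| = \omega$. Combined with step (ii), $U_{new}U_{new}^T = \Pi_{X_{new}}$ forces $\colspan(U_{new}) = \colspan(X_{new})$.

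The main obstacle is the bookkeeping in step (iii), and specifically the sign of $\omega$: the convention $\beta = -\sign(\omega)\|\tilde w\|/\|g\|$ is dictated by the cross-term $-\omega\|\tilde w\|/\|g\|^2$ in $\Pi_{X_{new}}$, and without the $\sign(\omega)$ factor the cross-term would have the wrong orientation and spoil the projector identity. Once that sign is correct, the remaining matching amounts to algebraic consequences of the single relation $\|g\|^2 = \|\tilde w\|^2 + \omega^2$, and the geodesic interpretation follows automatically from the trigonometric reading of $\alpha+1$ and $\beta$ as cosine and sine of the travel time $t^\ast$.
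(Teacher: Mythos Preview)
Your proposal is correct and follows essentially the same route as the paper: both arguments expand $\Pi_{X_{new}}$ from Lemma~\ref{lem:rank_one_proj} and $U_{new}U_{new}^T$ in the four outer-product blocks $UU^T$, $Uww^TU^T$, $qq^T$, $Uwq^T+qw^TU^T$, and then match coefficients using $\|g\|^2=\|\tilde w\|^2+\omega^2$ together with the sign choice for $\beta$. The only cosmetic differences are that the paper frames step~(iii) as solving for $t^\ast$ along the geodesic curve $\Pi_{U(t)}$ (thereby \emph{deriving} $\alpha,\beta$), whereas you verify the stated $\alpha,\beta$ directly; and the paper obtains $U_{new}\in St(n,p)$ implicitly from the geodesic formula rather than via your explicit check $(1+\alpha)^2+\beta^2=1$.
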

\begin{proof}
 First, note that by construction $\Delta^T U = w(q^TU) = 0$, so that indeed $\Delta \in T_{[U]}Gr(n,p)$, see \eqref{eq:GrassTangent}.
 Consider the corresponding geodesic
 \begin{equation}
\label{eq:RO-update-geo}
 t\mapsto [U](t) = Exp_{[U]}(t\Delta)=[U +\bigl((\cos(ts)-1) Uw + \sin(ts)q \bigr) w^T] 
 \end{equation}
 From Lemma \ref{lem:rank_one_proj}, we know that the orthogonal projection onto $\colspan(X_{new})$ is
 \begin{subequations}
  \nonumber
  \begin{align}
   \Pi_{X_{new}} &= 
   UU^T + qq^T - \frac{1}{\|g\|^2}\left(U\tilde{w} + \omega q \right)
   \left(\tilde{w}^TU + \omega q^T \right)\\
   &= UU^T + \left(1-\frac{\omega^2}{\|g\|^2}\right) qq^T
    - \frac{\|\tilde{w}\|^2}{\|g\|^2}\left(Uww^TU^T\right)\\
   & - \frac{\omega \|\tilde{w}\|}{\|g\|^2}\left(Uwq^T + qw^TU^T\right).
  \end{align}
 \end{subequations}
 The geodesic \eqref{eq:RO-update-geo} leads to a curve of orthogonal projectors
 $t\mapsto\Pi_{U(t)}= U(t)U(t)^T$. An elementary calculation shows that
  \begin{subequations}
  \nonumber
  \begin{align}
   \Pi_{U(t)}
   &= UU^T + \sin^2(t)qq^T
    - \sin^2(t)\left(Uww^TU^T\right)\\
   & + \cos(t)\sin(t)\left(Uwq^T + qw^TU^T\right).
  \end{align}
 \end{subequations}
 Comparing the expressions of $\Pi_{X_{new}}$ and $\Pi_{U(t)}$ term by term,
 we see that the task is reduced to find $t^*$ such that
   \begin{equation}
  \nonumber
   \left(1-\frac{\omega^2}{\|g\|^2}\right)
   = \sin^2(t^*) = \frac{\|\tilde{w}\|^2}{\|g\|^2}, \quad \mbox{and}\quad
   -\frac{\omega \|\tilde{w}\|}{\|g\|^2} = \cos(t^*)\sin(t^*).
 \end{equation}
 This is indeed possible:
 First, recall that $g= (\tilde{w}^T,\omega)^T$ and observe that
 \[
  \left(1-\frac{\omega^2}{\|g\|^2}\right)
  = \frac{\|g\|^2 -\omega^2}{\|g\|^2} = \frac{\|\tilde{w}\|^2}{\|g\|^2}<1.
 \]
 Hence, $\sin(t^*) = \pm \frac{\|\tilde{w}\|}{\|g\|}$.
 
 As a consequence,
 \begin{eqnarray*}
  \cos(t^*)\sin(t^*) &=& \sqrt{1-\sin^2(t^*)} \sin(t^*)
  =\sqrt{\frac{\|g\|^2 - \|\tilde{w}\|^2}{\|g\|^2}} (\pm 1)\frac{\|\tilde{w}\|}{\|g\|}\\
  &=&\pm \frac{|\omega|}{\|g\|} \frac{\|\tilde{w}\|}{\|g\|}
  \stackrel{!}{=} - \frac{\omega}{\|g\|} \frac{\|\tilde{w}\|}{\|g\|}.
 \end{eqnarray*}
 Hence, the sign must be chosen such that $\sin(t^*) = -\sign(\omega) \frac{\|\tilde{w}\|}{\|g\|}$
 and we obtain
 \[
  \alpha = \cos(t^*) -1 = \frac{|\omega|}{\|g\|}-1,\quad \beta = \sin(t^*) = -\sign(\omega) \frac{\|\tilde{w}\|}{\|g\|}.
 \]
 The corresponding step is $t^* = \arcsin\left(-\sign(\omega) \frac{\|\tilde{w}\|}{\|g\|}\right)$.
 For this choice of $t^*$, the orthogonal projectors $\Pi_{X_{new}}=\Pi_{U(t^*)}$ coincide.
 Due to the one-to-one correspondence between a subspace and the orthogonal projection onto it, we obtain
 $$[U](t^*) = \colspan(X_{new}),$$
 as claimed.
\end{proof}
Computing the orthogonal component $\tilde{q} = (I-UU^T)a$ consists of a single classical Gram-Schmidt step.
For stability reasons, this may replaced by a modified Gram-Schmidt step.
%
%
%
%
\subsection*{Subspace distance}
The theorem allows us to compute the Riemannian subspace distance between the original subspace 
$\colspan(X) = [U]$ and the adapted subspace $\colspan(X + ab^T) =  [U_{new}]$ immediately from the given data.
\begin{corollary}
 \label{cor:subspace_dist}
 The Riemannian subspace distance between the original subspace 
$\colspan(X) = [U]$ and the adapted subspace $\colspan(X + ab^T) =  [U_{new}]$
is
\[
 \mbox{dist}([U], [U_{new}]) = \arccos\left(\frac{|\omega|}{\|g\|}\right)
 = \arccos\left(\frac{|\omega|}{\sqrt{\|\tilde{w}\| + \omega^2}}\right),
\]
where $\tilde{w}, \omega$ are as introduced in Theorem \ref{thm:main}.
\end{corollary}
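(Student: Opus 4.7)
The proof plan is to read the distance directly off the geodesic constructed in Theorem \ref{thm:main}. Since geodesics on a Riemannian manifold starting with a unit-norm velocity are parametrized by arclength (at least up to the injectivity radius, which is $\pi/2$ on the Grassmannian so we are safely within it because the single non-zero principal angle lies in $[0,\pi/2]$), the distance from $[U]$ to $[U_{new}]$ will simply equal the absolute value of the parameter $t^*$ determined in the proof of Theorem \ref{thm:main}.

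First I would verify that the tangent vector $\Delta = qw^T$ produced by the theorem has unit Grassmann norm. Since $q\in\R^n$ and $w\in\R^p$ are both unit vectors, a short computation with the canonical metric gives
\[
\langle\Delta,\Delta\rangle_{Gr}
= \mathrm{tr}(wq^Tqw^T) = \|q\|^2\|w\|^2 = 1,
\]
so the geodesic $t\mapsto Exp_{[U]}(t\Delta)$ is unit-speed. Next I would invoke Corollary \ref{cor:ROmods}, which says that $[U]$ and $[U_{new}]$ differ by a non-trivial rank-one modification, so $\theta_1=\cdots=\theta_{p-1}=0$ and $\theta_p\in(0,\pi/2]$ are the principal angles; hence the subspace distance \eqref{eq:SubspaceDist} reduces to $\mathrm{dist}([U],[U_{new}]) = \theta_p$. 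This angle is exactly the Grassmann arclength traveled along the unit-speed geodesic that joins the two subspaces, i.e., $|t^*|$.

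It remains to convert the expression $t^* = \arcsin\!\bigl(-\sign(\omega)\|\tilde{w}\|/\|g\|\bigr)$ derived in the proof of Theorem \ref{thm:main} into the arccosine form stated in the corollary. Using $|t^*|=\arcsin(\|\tilde{w}\|/\|g\|)$ together with the identity $\arcsin(x)=\arccos(\sqrt{1-x^2})$ valid for $x\in[0,1]$, and recalling that $\|g\|^2=\|\tilde{w}\|^2+\omega^2$ by the definition of $g$ in Theorem \ref{thm:main}, I obtain
\[
|t^*| = \arccos\!\left(\sqrt{1-\tfrac{\|\tilde{w}\|^2}{\|g\|^2}}\right)
= \arccos\!\left(\tfrac{|\omega|}{\|g\|}\right)
= \arccos\!\left(\tfrac{|\omega|}{\sqrt{\|\tilde{w}\|^2+\omega^2}}\right),
\]
which is the claimed formula.

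The only subtle point — essentially a sanity check rather than a genuine obstacle — is confirming that the geodesic really does realize the distance, i.e., that we are below the cut locus. This is immediate here because Corollary \ref{cor:ROmods} ensures all principal angles are zero except possibly one, and that remaining angle lies in $[0,\pi/2]$, so $|t^*|\le\pi/2$ lies within the injectivity radius of the Grassmann exponential; hence the constructed geodesic is length-minimizing and the equality $\mathrm{dist}([U],[U_{new}])=|t^*|$ holds unconditionally.
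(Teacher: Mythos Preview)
Your argument is correct but takes a different route from the paper. The paper computes $U^TU_{new} = I + \alpha\, ww^T$ directly from \eqref{eq:RO-update-final}, diagonalizes it via an orthogonal completion $Z=(W^\bot,w)\in O(p)$ to read off the singular values $(1,\ldots,1,\cos t^*)$ and hence the principal angles $(0,\ldots,0,|t^*|)$; the distance $|t^*|=\arccos(|\omega|/\|g\|)$ then drops out of the definition \eqref{eq:SubspaceDist}. You instead reason purely Riemannian-geometrically: $\Delta=qw^T$ has unit canonical norm, so the geodesic is unit-speed, and since $|t^*|=\arcsin(\|\tilde w\|/\|g\|)\le\pi/2$ lies within the Grassmann injectivity radius, the geodesic is length-minimizing and the distance equals $|t^*|$. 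Your approach is conceptually cleaner and needs no further linear algebra, but it imports the injectivity-radius fact, which the paper never states; the paper's computation is self-contained and, as a by-product, exhibits the full eigendecomposition of $U^TU_{new}$. The paper actually nods toward your route in its closing parenthetical (``This can also be seen by converting \eqref{eq:RO-update-final} back to the general form \eqref{eq:grassgeo}''). One small redundancy in your write-up: invoking Corollary~\ref{cor:ROmods} to bound the nonzero principal angle by $\pi/2$ is unnecessary, since $|t^*|\le\pi/2$ follows immediately from the range of $\arcsin$.
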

\begin{proof}
Let $w$ be as in the theorem and let $W^\bot$ be an orthogonal completion such that $Z=(W^\bot, w)\in O(p)$.
From \eqref{eq:RO-update-final}, we have
\begin{subequations}
 \nonumber
 \begin{align}
  U^TU_{new} &= U^T\left(U + \left(\alpha Uw + \beta q\right) w^T\right) = I + \alpha ww^T\\
   & = I + \alpha Z \begin{pmatrix}[c|c]
                                  0_{p-1} &   \\
                                  \hline
                                    & 1
                                 \end{pmatrix} Z^T\\
&     =Z\left(
      \begin{pmatrix}[c|c]
      I_{p-1} &   \\
      \hline
      & 1
     \end{pmatrix}
     + 
     \begin{pmatrix}[c|c]
      0_{p-1} &   \\
      \hline
      &  \cos(t^*) -1
     \end{pmatrix}\right) Z^T.
 \end{align}
\end{subequations}
Hence, the subspace distance is 
\[
 \|(\arccos(1),\ldots,\arccos(1), \arccos(\cos(t^*)))^T\| = |t^*| = \arccos\left(\frac{|\omega|}{\|g\|}\right).
\]
(This can also be seen by converting \eqref{eq:RO-update-final} back to the general form \eqref{eq:grassgeo}.)
\end{proof}
\subsection*{Recovering $\mathbf{X_{new}}$}
If $X\in \R^{n\times p}$ has a decomposition $X=UW$ with $U\in St(n,p)$, then Theorem \ref{thm:main}
gives $U_{new}\in St(n,p)$ such that $\colspan(X_{new}) = \colspan(X+ab^T) = [U_{new}]$.
We may use this to construct a decomposition $X_{new}=U_{new}W_{new}$.
In fact, since $\colspan(X+ab^T) = [U_{new}]$, it holds
\[
 X_{new} = X+ab^T = UW + ab^T = U_{new}W_{new},
\]
with a suitable $W_{new}\in \R^{p\times p}$. Multiplying with $U_{new}^T$ from the left gives
\[
 W_{new} = U_{new}^TUW + U_{new}^Tab^T.
\]
This is also clear from the fact that $U_{new}U_{new}^TX_{new} = X_{new}$.
By inserting the explicit formula for $U_{new} = U + \left(\alpha Uw + \beta q\right) w^T$,
the updated $W_{new}$ is obtained from a rank-one update on $W$ via
\begin{align}
\label{eq:W_update}
 W_{new} &= W + \left(U^Ta + \gamma w\right) b^T,
 & \gamma = \left(\beta (q^Ta) - \alpha \frac{\|\tilde{q}\|\omega}{\|\tilde{w}\|}\right) \in \R,
\end{align}
where all quantities are as introduced in Theorem \ref{thm:main}.
As a consequence, the rank-one update $X_{new} = UW + ab^T = U_{new}W_{new}$
splits into a orthogonality-preserving rank-one update on $U$, which gives $U_{new}$ and an associated 
rank-one update on $W$, which makes the geometric update of the orthogonal decomposition $X=UW$
very efficient. When compared to the classical SVD- or QR updates, one looses the special structure of the $W$-factor, though.
Yet, this may be reestablished by exclusively operating on $p$-by-$p$-matrices, when storing the leftmost subspace representative $U$ and the factors of the $W$-decomposition separately, as suggested in \cite{Brand2006}.
\subsection*{Computational complexity}
Computationally, Theorem \ref{thm:main} reduces the rank-one modified orthogonal decomposition to an elementary matrix update
\[
 U_{new} = U + xy^T \in \R^{n\times p}.
\]
Computing $x = U(\alpha w) + \beta q$ requires the following operations:
\begin{subequations}
  \begin{align}
  \nonumber
  \bar{a} = U^Ta: \hspace{0.2cm} np \mbox{ FLOPS}, \hspace{0.2cm} &  \tilde{q} = a-U\bar{a}:\hspace{0.2cm} np +n \mbox{ FLOPS}\\
    \nonumber
  q = \frac{\tilde{q}}{\|\tilde{q}\|}: \hspace{0.2cm} 2n \mbox{ FLOPS}, \hspace{0.2cm} &  \tilde{w} = -W^{-T}b, \hspace{0.2cm} w = \frac{\tilde{w}}{\|\tilde{w}\|}:\hspace{0.2cm} \mathcal{O}(p^3)\mbox{ FLOPS}\\
    \nonumber
  \omega = \frac{1}{\|\tilde{q}\|}(1-\bar{a}^T\tilde{w}): \hspace{0.2cm} \mathcal{O}(p) \mbox{ FLOPS} \hspace{0.2cm}& g = (\tilde{w}^T, \omega)^T, \|g\|:\hspace{0.2cm} \mathcal{O}(p)\mbox{ FLOPS} \\
    \nonumber
  \alpha = \frac{|\omega|}{\|g\|} - 1:\hspace{0.2cm} \mathcal{O}(1)\mbox{ FLOPS} , \hspace{0.2cm} & \beta = -\sign(\omega) \frac{\|\tilde{w}\|}{\|g\|}:\hspace{0.2cm} \mathcal{O}(1)\mbox{ FLOPS}.
  \end{align}
\end{subequations}
Assuming that $n\gg p$, we count only the operations that scale in $n$:
These sum up to $3np + 4n$ (computing $x$) + $np$ (computing $U+xy^T$) = $4np + 4n= \mathcal{O}(np)$ FLOPS.

Note that all the terms that appear in the $W$-update \eqref{eq:W_update}
are already available from the computations for $U_{new}$.
Therefore, after $U_{new}$ is known, the corresponding $W_{new}$ is obtained via an elementary rank-one update
on the $(p\times p)$-matrix $W$ which consumes $\mathcal{O}(p^2)\subset \mathcal{O}(np)$ FLOPS,
see \eqref{eq:W_update}. 
Hence, both factors $U_{new}$, $W_{new}$ of the complete update of the orthogonal decomposition
\[
 X_{new} = UW + ab^T = U_{new}W_{new}.
\]
are obtained in $\mathcal{O}(np)$ FLOPS.\footnote{The Level-2 BLAS operation ``DGER'' performs the rank-one operation $A := \alpha xy^T + A$.}

In contrast, the rank-one adaptations \eqref{eq:brandupdate1} and \eqref{eq:QR_rank1} require the matrix-matrix product
of an $n\times (p+1)$ matrix with a $(p+1)\times p$-matrix for computing the adapted subspace representative, which alone takes $n(p+1)p = \mathcal{O}(np^2)$ FLOPS.
%
\section{Relation to existing work}
\label{sec:related}
\paragraph{The Wedderburn rank reduction theorem}
The Wedderburn-Egerv{\'a}ry  rank reduction formula characterizes the rank-one modifications that reduce the rank of a given rectangular matrix $X\in\R^{n\times p}$ exactly by one, see \cite{ChuFunderlicGolub1995}. An account of the history of this theory is given in \cite{Galantai2010}. 
The precise statement is as follows:
If $x\in\R^p$, $y\in\R^n$ satisfy $0\neq y^TXx$, then
\begin{equation}
 \label{eq:wedderburn}
 X_{new} := X - \frac{1}{y^TXx} (Xx) (y^TX)
\end{equation}
has rank exactly one less than the unmodified $X$.
Conversely, if $a\in\R^n$, $b\in\R^p$ and $\rho\in\R\setminus\{0\}$
are such that $X_{new} = X- \frac{1}{\rho} ab^T$ has a rank exactly one less than $X$, then
there exist $x\in\R^p$, $y\in \R^n$ such that $a=Xx$ and $b=X^Ty$ and $\rho = y^TXx$.

In particular, the rank of $X_{new}= X - \frac{1}{\rho} ab^T$ can only decrease if the vector $a\in\R^n$ is {\em in the range of $X$.}
Exactly this case is excluded in the considerations of this work.
\paragraph{Grassmannian Rank-One Update Subspace Estimation}
The Grassmannian Rank-One Update Subspace Estimation (GROUSE, \cite{balzano2010online})
 considers the unique, subspace-dependent residual associated with a least-squares problem of the form
  \begin{equation*}
    \argmin_{b \in \R^p} \|A^T Ub - a\|_2^2
  \end{equation*}
as a differentiable function on the Grassmannian
 \begin{equation*}
  F: Gr(n, d)  \rightarrow \R, \quad [U] \mapsto a^T(I - A^T U (U^TAA^T U)^{-1} U^TA) a.
\end{equation*}
GROUSE is an iterative optimization scheme that operates on a sequence of incoming, possible incomplete data vectors $a\in\R^n\setminus\{0\}$.
Each iteration is based on following the Grassmann geodesic in the direction of steepest descent $-\nabla_{[U]}F$ with respect to the above residual norm function $F$.
The Grassmann gradient is $\nabla_{[U]}F = -2A(a-A^TUb)b^T = -2Ar(b)b^T \in T_{[U]}Gr(n,p)$,
where $b = (U^TAA^TU)^{-1}U^TAa$ is the optimal coefficient vector associated with the above least-squares problem
and $a-A^TUb=r(b)$ is the corresponding residual vector.
Note that the gradient is of rank-one. GROUSE thus makes extensive use of the formulas 
\eqref{eq:rank1grad_svd}, \eqref{eq:rank1geo} for the specific rank-one descent directions $\Delta = qsw^T = 2Ar(b)b^T$.
In the simplest case, where $A=I_n$, it holds 
$q= \frac{(I-UU^T)a}{\|(I-UU^T)a\|}, w = \frac{b}{\|b\|}=\frac{U^Ta}{\|U^Ta\|}$, which shows that the left-singular vector $q$ and the right-singular vector $w$ are not independent quantities, since both vectors are functions of $a$ and $U$.
In any case, the singular vectors $q$ and $w$ from the SVD of the rank-one gradient $\nabla_{[U]}F$ are special in the sense that they correspond to a certain subset of rank-one tangent vectors that arise from least-squares problems, 
where both exhibit a functional dependency on $a, U$ (and $A$). In contrast, Theorem \ref{thm:main} considers completely general rank-one tangent vectors.
%
%
\appendix
\section{Proof of Lemma }
\begin{proof}
We start with a decomposition inspired by \cite[eq. (3)]{Brand2006}.
Note that $(U,q)\in St(n, p+1)$ by construction. It holds that
\begin{equation}
 \nonumber
 X + ab^T = (U,q)
    \begin{pmatrix}
      W + U^Tab^T\\
      \|\tilde{q}\|b^T
    \end{pmatrix}
   =: (U,q) K,
\end{equation}
where $K\in \R^{(p+1)\times p}$.
Let $K = \tilde{U}\tilde{W}$
be a decomposition of $K$ with $\tilde{U} \in St(p+1, p)$, $\tilde{W} \in \R^{p \times p}$ regular.
Hence,
\begin{equation}
 \nonumber
 X + ab^T = \left((U,q)\tilde{U}\right)\tilde{W} =: U_{new}W_{new}.
\end{equation}
Let $g\in \R^{p+1}$ be such that $(\tilde{U}, \frac{g}{\|g\|}) \in O_{p+1}$
is an orthogonal completion of $\tilde{U}$.
Because of
$
  I_{p+1} = (\tilde{U}, \frac{g}{\|g\|})(\tilde{U}, \frac{g}{\|g\|})^T,
$
we have
\[
  \tilde{U}\tilde{U}^T = I_{p+1} - \frac{1}{\|g\|^2}gg^T
\]
and, as a consequence,
\begin{equation}
\label{eq:aux_rank-one_proj}
 U_{new}U_{new}^T
 = (U,q)\tilde{U}\tilde{U}^T \begin{pmatrix}
                         U^T\\
                         q^T
                        \end{pmatrix}
 = (U,q)\left(I_{p+1} - \frac{1}{\|g\|^2}gg^T\right)
                        \begin{pmatrix}
                         U^T\\
                         q^T
                        \end{pmatrix}.
\end{equation}
Hence, it is sufficient to determine $g$, which is characterized up
to a scalar factor by $\tilde{U}^Tg =0$.
Since $\colspan(K) = \colspan(\tilde{U})$, this condition is equivalent to
$K^Tg=0$.
Let $\tilde{w}\in\R^{p}$ denote the first $p$ components of $g$
and let $\omega\in \R$ be the last entry such that
$g^T = (\tilde{w}^T,\omega)$.
When writing the equation $g^TK=0$ as
\[
 (\tilde{w}^T,\omega)\begin{pmatrix}
                 I_p & U^Ta\\
                 0      & \|\tilde{q}\|_2
                \end{pmatrix}
                \begin{pmatrix}
                 W\\
                 b^T
                \end{pmatrix} =0,
\]
it is straightforward to show that
$g  = \begin{pmatrix}
        -W^{-T}b\\
        \frac{1}{\|\tilde{q}\|_2}(1+a^TUW^{-T}b)
       \end{pmatrix} \in \R^{p +1}$
and any scalar multiple of this vector
is a valid solution.
Using this vector in \eqref{eq:aux_rank-one_proj} proves the lemma.
\end{proof}

\section{MATLAB code}
\paragraph{MATLAB function for performing the adaptation of Theorem \ref{thm:main}}
%
%
%
\begin{verbatim}
function [ Unew, Wnew, Sdist ] = grood(U, W, a, b)
%----------------------------------------------------------
% Perform a 
%    (G)eometric (R)ank-(O)ne (O)rthogonal (D)ecomposition
%     Unew Wnew = UW + ab'
% on a matrix X with known orthogonal decomposition X=UW
%
% Inputs:
%   UW: decomposition of X=UW into a column-orthogonal 
%       (nxp)-matrix U and a regular (pxp)-matrix W
%    a: n-vector
%    b: p-vector
%
% Outputs:
% Unew: column-orthogonal matrix with range 
%       ran(Unew) = ran(U+ab')
% Wnew: second factor in orthogonal matrix decomposition
%Sdist: subspace distance between ran(Unew) and ran(U)
%
% author: R: Zimmermann, IMADA, SDU Odense
%   zimmermann@imada.sdu.dk
%----------------------------------------------------------
% compute orthogonal component of a and normalize
Ua= U'*a;
q = a - U*Ua;
n_q = norm(q);
q_n  = q/n_q;
%
% proceed only, if a is not in ran(U)
if n_q > 1.0e-12
    w = linsolve(W', -b);
    % Grassmann update:
    omega = (1.0/n_q)*(1 - Ua'*w);
    n_w = norm(w);
    w_n = w/n_w;
    % compute the norm of g = (w, omega)
    n_g = sqrt(n_w^2 + omega^2);
    % compute the (cos(t)-1) and sin(t)-factors:
    sin_factor = -sign(omega)*(n_w/n_g);
    cos_factor = abs(omega)/n_g - 1;
    % compute the rank-one update
    rank1_up = (cos_factor*U*w_n + sin_factor*q_n)*w_n';
    Unew = U + rank1_up;
    % compute the update on W
    Wnew = W + (Ua + (sin_factor*(q_n'*a) ...
           - cos_factor*(n_q*omega)/n_w)*w_n)*b';
    %subspace distance
    Sdist = acos(abs(omega)/n_g);
else
    Unew = U;
    Wnew = W + Ua*b;
    Sdist = 0.0;
return;
end
\end{verbatim}
%
%
%
%
%
%
%
%
%

\begin{thebibliography}{10}

\bibitem{AbsilMahonySepulchre2008}
P.-A. Absil, R.~Mahony, and R.~Sepulchre.
\newblock {\em Optimization Algorithms on Matrix Manifolds}.
\newblock Princeton University Press, Princeton, New Jersey, 2008.

\bibitem{balzano2010online}
L.~Balzano, R.~Nowak, and B.~Recht.
\newblock Online identification and tracking of subspaces from highly
  incomplete information.
\newblock In {\em Communication, Control, and Computing (Allerton), 2010 48th
  Annual Allerton Conference on}, pages 704--711. IEEE, 2010.

\bibitem{Brand2006}
M.~Brand.
\newblock Fast low-rank modifications of the thin singular value decomposition.
\newblock {\em Linear Algebra and its Applications}, 415:20--30, 2006.

\bibitem{BunchNielsen1978}
J.~R. Bunch and C.~P. Nielsen.
\newblock Updating the singular value decomposition.
\newblock {\em Numerische Mathematik}, 31:111--129, 1978.

\bibitem{BunchNielsenSorensen1978}
J.~R. Bunch, C.~P. Nielsen, and D.~C. Sorensen.
\newblock Rank-one modifications of the symmetric eigenproblem.
\newblock {\em Numerische Mathematik}, 31:31--48, 1978.

\bibitem{Chandrasekaran1997}
S.~Chandrasekaran, B.~S. Manjunath, Y.~F. Wang, J.~Winkeler, and H.~Zhang.
\newblock An eigenspace update algorithm for image analysis.
\newblock {\em Graphical Models and Image Processing}, 59(5), 1997.

\bibitem{ChuFunderlicGolub1995}
M.~T. Chu, R.~Funderlic, and G.~H. Golub.
\newblock A rank-one reduction formula and its applications to matrix
  factorizations.
\newblock {\em {SIAM} Review}, 37(4):512--530, 1995.

\bibitem{Kaufman1976}
J.~W. Daniel, W.~B. Gragg, L.~Kaufman, and G.~W. Stewart.
\newblock Reorthogonalization and stable algorithms for updating the
  {G}ram-{S}chmidt {QR} factorization.
\newblock {\em Mathematics of Computation}, 30:772--795, 1976.

\bibitem{DoCarmo2013riemannian}
M.~P. do~Carmo.
\newblock {\em {R}iemannian Geometry}.
\newblock Mathematics: Theory \& Applications. Birkh{\"a}user Boston, 1992.

\bibitem{EdelmanAriasSmith1999}
A.~Edelman, T.~A. Arias, and S.~T. Smith.
\newblock The geometry of algorithms with orthogonality constraints.
\newblock {\em SIAM Journal on Matrix Analysis and Applications},
  20(2):303--353, April 1998.

\bibitem{Galantai2010}
A.~Gal{\'a}ntai.
\newblock The rank reduction procedure of {E}gerv{\'a}ry.
\newblock {\em Central European Journal of Operations Research}, 18(1):5--24,
  Mar 2010.

\bibitem{GillGolubMurraySaunders1974}
P.~E. Gill, G.~H. Golub, W.~Murray, and M.~A. Saunders.
\newblock Methods for modifying matrix factorizations.
\newblock {\em Mathematics of Computation}, 28:505--535, 1974.

\bibitem{GolubVanLoan4th}
G~.H. Golub and C.~F. Van~Loan.
\newblock {\em Matrix Computations}.
\newblock The John Hopkins University Press, Baltimore, 4th edition, 2013.

\bibitem{Gu:1994:SEA:196045.196076}
M.~Gu and S.~C. Eisenstat.
\newblock A stable and efficient algorithm for the rank-one modification of the
  symmetric eigenproblem.
\newblock {\em SIAM Journal on Matrix Analysis and Applications},
  15:1266--1276, October 1994.

\bibitem{HallMarshallMartin2000}
P.~Hall, D.~Marshall, and R.~Martin.
\newblock Merging and splitting eigenspace models.
\newblock {\em IEEE Transactions on Pattern Analysis and Machine Intelligence},
  22(9):1042--1049, 2000.

\bibitem{MoonenVanDoorenVandewalle1992}
M.~Moonen, P.~Van~Dooren, and J.~Vandewalle.
\newblock A singular value decomposition updating algorithm for subspace
  tracking.
\newblock {\em SIAM Journal on Matrix Analysis and Applications},
  13(4):1015--1038, 1992.

\bibitem{pehersto15lowrank}
B.~Peherstorfer and K.~Willcox.
\newblock Online adaptive model reduction for nonlinear systems via low-rank
  updates.
\newblock {\em {SIAM} Journal on Scientific Computing}, 37(4):A2123--A2150,
  2015.

\bibitem{Thompson1976}
R.~C. Thompson.
\newblock The behavior of eigenvalues and singular values under perturbations
  of restricted rank.
\newblock {\em Linear Algebra and its Applications}, 13:69--78, 1976.

\bibitem{ZimmermannPeherstorferWillcox_2017}
R.~Zimmermann, B.~Peherstorfer, and K.~Willcox.
\newblock Geometric subspace updates with applications to online adaptive
  nonlinear model reduction.
\newblock {\em {SIAM} Journal on Matrix Analysis and Application},
  39(1):234--261, 2018.

\bibitem{ZimmermannWillcox2016}
R.~Zimmermann and K.~Willcox.
\newblock An accelerated greedy missing point estimation procedure.
\newblock {\em SIAM Journal on Scientific Computing}, 38(5):A2827--A2850, 2016.

\end{thebibliography}
%

\end{document}